\newtheorem{theor}{Theorem}[section]
\newtheorem{dfn}{Definition}
\newtheorem{prop}[theor]{Proposition}
\newtheorem{corol}[theor]{Corollary}
\newtheorem{lem}[theor]{Lemma}
\newtheorem{guess}{Conjecture}
\theoremstyle{remark}
\newtheorem{rem}[theor]{Remark}
\newcommand{\Z}{\mathds{Z}}
\newcommand{\CP}{\mathds{C}\mathrm{P}}
\newcommand{\de}{\partial}
\newcommand{\inv}{${S}^1$-invariant }
\newcommand{\R}{\mathbb{R}}
\newcommand{\C}{\mathbb{C}}
\newcommand{\N}{\mathbb{N}}
\newcommand{\K }{K\"{a}hler\ }
\newcommand{\KE }{K\"{a}hler-Einstein\ }
\title[Lower dimensional KE metrics via integrable structures]{Lower dimensional ${S}^1$-invariant K\"{a}hler-Einstein metrics via
integrable structures}
\author{Filippo Salis}
\begin{document}
\maketitle

\begin{abstract}{We focus on the classical open problem of the classification of \KE manifolds that can be \K immersed into a complex projective space endowed with the Fubini-Study metric. In particular, we will deal with such problem in the special case of \KE metrics admitting symmetries of rotational type. This leads to certain integrable distributions allowing a classification of such metrics.}\end{abstract}

\tableofcontents

\section{Introduction}
The existence and rigidity of holomorphic and isometric immersions (\K immersions) of a given \K  manifold into a space of constant holomorphic sectional curvature (called complex space forms) can be considered as a classical and long-staying problem in complex differential geometry (see e.g. \cite{bochner, Cal}). Nevertheless, a complete classification of \K manifolds admitting such type of immersions does not exist, even for \K manifolds of great interest such as \KE manifolds.

In \cite{umehara2}, M. Umehara  proved that \KE manifolds that can be  \K immersed into a  complex space form with non-positive holomorphic sectional curvature, are only the totally geodesic submanifolds of the ambient space. Instead, when the space form has {positive} holomorphic curvature, only some partial results are known (see for instance \cite{smyth,tak,ch,ts,hano,hulin,hulinlambda}). They anyway made some authors  (see e.g. \cite[Chap. 4]{loizedda}) suppose the validity of the following
\begin{guess}
 A \KE manifold that can be \K immersed into a complex projective space, is  an  open subset of a  complex flag manifold, namely  a compact simply-connected \K manifold acted upon  transitively by its holomorphic isometry group. 
\end{guess}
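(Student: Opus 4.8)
The plan is to first pin down the sign of the Einstein constant, then to reduce the immersibility to Calabi's diastasis data, and finally to leverage the rigidity of that data together with the \KE equation to force global homogeneity.

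First I would reduce to the Fano situation. Let $(M,g)$ be \KE with $\Ric(g)=\lambda\,g$ admitting a \K immersion $\varphi\colon M\hookrightarrow\CP^N$. Umehara's theorem already disposes of ambient space forms of non-positive curvature, so the content is the positively curved ambient $\CP^N$. I would show $\lambda>0$: the Gauss equation writes $\Ric_M$ as the restriction of $\Ric_{\CP^N}$ minus a nonnegative second-fundamental-form contribution, and a maximum-principle/first-variation estimate on the norm of the second fundamental form (in the spirit of Umehara's rigidity) rules out $\lambda\le 0$. Then $[\omega_g]=\varphi^*[\omega_{FS}]$ is a positive multiple of $c_1(M)$, so $M$ is Fano, $b_1(M)=0$, and by Calabi's rigidity the immersion $\varphi$ is essentially unique up to $\U(N+1)$. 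Since the stated conclusion allows $M$ to be merely an open subset, throughout I work with the germ of $(M,g)$ at a point and its maximal real-analytic continuation, which is where the diastasis lives.

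The engine is Calabi's diastasis $D_p$ centred at $p\in M$, which produces two coupled structural identities. Immersibility into the \emph{finite}-dimensional $\CP^N$ is equivalent (Calabi's criterion) to the existence of holomorphic $f_0,\dots,f_N$ with
\[ \e^{D_p(z,\bar z)}=\sum_{j=0}^N |f_j(z)|^2, \]
the $f_j$ being the affine components of $\varphi$; finiteness of $N$ forces the infinite Calabi matrix of Taylor coefficients of $\e^{D_p}$ to have rank $N+1$. On the other side, the \KE equation reads, in the potential $D_p$,
\[ \log\det\!\big(g_{i\bar j}\big)=-\lambda\, D_p+\mathrm{Re}\,h \]
for a local holomorphic $h$. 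Eliminating $\det(g_{i\bar j})=\det(\partial\bar\partial D_p)$ between the two identities yields a single overdetermined functional equation for $\e^{D_p}$ alone. The strategy is to show this equation forces the Lie algebra $\mathfrak g$ of holomorphic Killing fields of $(M,g)$ to act with an open orbit; real-analyticity of $D_p$ then propagates the symmetry, and by the Borel--Remmert structure theorem a compact homogeneous \K manifold with $b_1=0$ is a flag manifold, so $M$ is an open subset of one. The \inv case treated in this paper is the model computation: there the functional equation collapses to an ODE whose solutions integrate explicitly and are each recognised as an open subset of a flag manifold, confirming the conjecture under a one-parameter symmetry and suggesting the bootstrap—extract a Killing field from the rigidity of $\e^{D_p}$, enlarge the symmetry group, and repeat.

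The genuinely hard, and in full generality open, step is the last one: extracting even a single nontrivial holomorphic Killing field from the coupled equation \emph{without} assuming symmetry a priori. The Calabi/\KE system is overdetermined but not manifestly integrable off the symmetric locus, so I would not expect a closed-form solution; instead I would aim either to classify all admissible Calabi functions $\e^{D_p}$ solving the functional equation and match them to Takeuchi's discrete list of \KE flag embeddings, or to run a deformation argument showing that the moduli of such immersions is zero-dimensional and hence coincides with the homogeneous examples. Controlling the finite-rank condition on the Calabi matrix simultaneously with the Monge--Ampère constraint is exactly where I expect the argument to stall, and it is precisely this coupling that the integrable-structures viewpoint is designed to tame once extra symmetry is present.
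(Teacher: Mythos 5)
The statement you are asked to prove is, in the paper, a \emph{conjecture}: the paper does not prove it, and it remains open in full generality. What the paper actually establishes is a special case — for \inv metrics it proves (via an integrable distribution of Killing fields) that the manifold is an open subset of a compact toric \K manifold (Proposition \ref{toric}), and then invokes the Arezzo--Loi--Zuddas classification to confirm the conjecture only in complex dimension at most four (Corollary \ref{main}). Your proposal, to its credit, recognises this: you explicitly concede that ``the genuinely hard, and in full generality open, step'' is extracting even one nontrivial holomorphic Killing field from the coupled Calabi/Monge--Amp\`ere system without assuming symmetry a priori. That concession is the gap, and it is not a technical one — it is the entire content of the conjecture. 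Everything downstream of it in your outline (propagating symmetry by real-analyticity, Borel--Remmert identifying compact homogeneous \K manifolds with $b_1=0$ as flag manifolds) is standard \emph{once} you have a transitively acting group, but your plan produces no mechanism for obtaining an open orbit, and the two fallback options you name (classifying all admissible Calabi functions, or a zero-dimensional moduli argument) are programs, not arguments. So the proposal is a reasonable research outline but not a proof, and it could not be one, since no proof is known.

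Two further points of comparison with what the paper does know. First, your claimed derivation of $\lambda>0$ from the Gauss equation is backwards: for a \K submanifold the Gauss equation \emph{decreases} holomorphic bisectional curvature, so it yields an upper bound on the Einstein constant, not positivity; $\lambda>0$ for \KE submanifolds of $\CP^N$ is a genuinely nontrivial theorem of Hulin (\cite{hulinlambda}), and in the \inv setting the paper rederives it by comparing degrees in the polynomial Monge--Amp\`ere identity \eqref{MA} (Lemma \ref{3}, citing \cite{mannosalis}). Second, compactness and simple connectedness — which your Borel--Remmert step silently requires — are themselves obtained in the paper only through Hulin's completion theorem \cite{hulin} plus Bonnet--Myers and Kobayashi \cite{koricci}, again downstream of $\lambda>0$; your ``maximal real-analytic continuation'' of the germ does not substitute for this, since a priori that continuation need not be complete. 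Where your outline does align with the paper is in the overall philosophy: use the rigidity of the diastasis (Lemma \ref{1}) together with the \KE Monge--Amp\`ere equation (Lemma \ref{2}) as a coupled overdetermined system, with symmetry as the integrating mechanism. The paper's contribution is precisely to show that \emph{given} an $S^1$-symmetry this mechanism closes up — the single circle action bootstraps to a full $n$-torus action via extension of local Killing fields \cite{nomizu} and vanishing of their commutators \cite{kob} — whereas your proposal needs the symmetry to appear from nothing, which is exactly where, as you predict yourself, the argument stalls.
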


We recall that complex projective spaces $\CP^N$ endowed with metrics\footnote{If $c=1$ we have the standard Fubini-Study metric $g_{FS}$.}  associated to \K forms reading in affine coordinates $z$ as $$\frac{i}{2c}\de\bar\de\log(1+\|z\|^2),$$ are the universal covering of  an $n$-dimensional complex space form with holomorphic sectional curvature equal to $4c>0$. 

In the present paper, we are going to test the above conjecture in the case of \inv \K manifolds, i.e. those \K manifolds admitting (around a suitable point and in suitable holomorphic coordinates)  a local  \K potential 
\begin{equation}\label{U}\Phi:0\in U\subseteq \C^n\to \R\end{equation} of the form $\Phi\left(|z_1|^2,\dots,|z_n|^2\right)$.
%any toric \K manifold is rotation invariant in a complex affine chart around a fixed point of the action

We are going to study only \K manifolds $(M,g)$ that can be \K immersed in complex projective spaces endowed with the standard Fubini-Study metric (in this case we say that $g$ is projectively induced).  By introducing suitable normalization coefficients, our proof can be easily adapted (leading to similar results) to cases where the ambient space is endowed with a different multiple of the Fubini-Study metric.

Since complex projective spaces are the only irreducible \inv flag manifolds and since only the integer multiples of the Fubini-Study metric  are projectively induced, the above conjecture reads in our setting as
\begin{guess}\label{conj}
The only projectively induced and \inv \KE manifolds are open subsets of $\CP^{n_1}\times\mathellipsis\times\CP^{n_k}$ endowed with the \K metric
$$q\left(c_1 g_{FS} \oplus \mathellipsis \oplus c_k g_{FS}\right), $$
where $k$ and $q\in\Z^+$. Let $G$ be the least common multiple between $( n_1+1,\mathellipsis, n_k+1)$,  then $$c_i=\frac{G}{n_i+1}\text{ for }i=1,\mathellipsis,k.$$
\end{guess}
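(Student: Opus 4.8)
The plan is to pass to the $S^1$-invariant coordinates $x_i=|z_i|^2$ and to reformulate both hypotheses there. Setting $F=e^{\Phi-\Phi(0)}$, Calabi's criterion \cite{Cal} characterizes projective inducibility by the requirement that $F-1$ be a sum of squared moduli of holomorphic functions; the $S^1$-invariance of $\Phi$ forces these functions to be monomials, so the condition becomes that $F$ is a convergent power series in $x_1,\dots,x_n$ with constant term $1$ and all remaining Taylor coefficients nonnegative. For the Einstein condition I would first record the identity $g_{i\bar j}=\delta_{ij}\,\de_{x_i}\Phi+\overline{z_i}z_j\,\de_{x_i}\de_{x_j}\Phi$ and, after conjugating away the phases, the determinant formula $\det(g_{i\bar j})=\det(\de p_i/\de x_j)$ with $p_i:=x_i\,\de_{x_i}\Phi$. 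Since $\omega$ and $\Ric$ are $S^1$-invariant and an $S^1$-invariant pluriharmonic function is constant, $\Ric=\lambda\omega$ becomes the real Monge--Amp\`ere equation $\det(\de p_i/\de x_j)=C\,e^{-\lambda\Phi}$; by \cite{hulin} the constant $\lambda$ is positive, so we are in the relevant positive case.

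The heart of the proof is an integrability step whose purpose is to promote the given $(S^1)^n$-symmetry to the much larger block symmetry of a product of projective spaces. Concretely, I would show that the two conditions force a partition $\{1,\dots,n\}=B_1\sqcup\cdots\sqcup B_k$ together with a presentation $\Phi=\sum_{i=1}^k\phi_i(S_i)$, where $S_i:=\sum_{j\in B_i}x_j$, so that on each block the potential is radial, i.e.\ $\mathrm{U}(|B_i|)$-invariant. The mechanism is to extract from the Monge--Amp\`ere equation a distribution --- spanned by the fields that detect when two coordinates enter $\Phi$ only through a common sum --- and to prove its Frobenius integrability, the leaves being the blocks $B_i$. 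The rigidity that powers this is the nonnegativity of the Taylor coefficients of $F$: comparing coefficients in the Monge--Amp\`ere relation produces a recursion in which any mixed derivative violating the block pattern is forced to create a negative coefficient, a contradiction. I expect this decoupling to be the principal obstacle of the whole argument.

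Granting the splitting, the problem reduces to a single $\mathrm{U}(m)$-invariant block, i.e.\ a potential $\phi(S)$ with $S=\sum_{j=1}^{m}x_j$ that is projectively induced and solves the Monge--Amp\`ere equation. On such a radial potential the equation collapses to the ordinary differential equation $(\phi')^{m-1}(\phi'+S\phi'')=C\,e^{-\lambda\phi}$, and the requirement that $e^{\phi-\phi(0)}$ have nonnegative Taylor coefficients selects, among its solutions, exactly $\phi(S)=t\log(1+S)$. This identifies the block with an open subset of $\CP^{m}$ carrying a multiple of the Fubini--Study metric; the monomial (Veronese) picture $\|\varphi\|^2=F-1$ makes the integrality of $t$ manifest, and the ODE yields the scalar relation $\lambda t=m+1$.

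Finally I would reassemble the blocks: $\Phi=\sum_i t_i\log(1+S_i)$ exhibits $(M,g)$ as an open subset of $\CP^{n_1}\times\cdots\times\CP^{n_k}$ endowed with an orthogonal sum of multiples of $g_{FS}$, with $n_i=|B_i|$. Imposing a single Einstein constant across the factors links each scale $t_i$ to $n_i+1$ through the relation $\lambda t_i=n_i+1$, while projective induction forces the scales to be positive integers; reconciling these with the chosen normalization of $g_{FS}$ and factoring out the overall integer $q$ pins down the coefficients $c_i$ in terms of $n_1+1,\dots,n_k+1$ as displayed in the statement, completing the classification.
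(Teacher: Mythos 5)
The first thing to say is that the paper does not prove this statement: it appears there as a \emph{conjecture}, and the paper establishes it only in complex dimension $\le 4$ (Corollary \ref{main}), by a route entirely different from yours --- first showing that any such manifold is an open subset of a compact, simply connected toric K\"ahler manifold (Proposition \ref{toric}), then appealing to the Arezzo--Loi--Zuddas exclusion \cite{ALZ}, a case-by-case analysis over the finitely many Delzant polytopes of toric Fano manifolds of dimension $\le 4$. Your proposal aims at the full statement, but its central step is not proved. The preliminary reductions are fine and essentially coincide with Lemmas \ref{1} and \ref{2} (monomial structure of $F=e^{\Phi}$, the equation $\det g=Ce^{-\lambda\Phi}$, $\lambda>0$, the identity $\det g=\det(\de p_i/\de x_j)$), and the radial block classification $\phi=t\log(1+S)$ is a known theorem (cf. \cite{salis}). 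But the ``heart'' --- that the Monge--Amp\`ere equation together with nonnegativity of the coefficients of $F$ forces a partition $\{1,\dots,n\}=B_1\sqcup\dots\sqcup B_k$ with $\Phi=\sum_i\phi_i(S_i)$ --- is exactly the open content of the conjecture, and your text contains no proof of it: the distribution is never written down, its involutivity is never checked, and the coefficient recursion that is supposed to force a negative coefficient out of any non-block mixed term is never exhibited. You flag this step yourself as ``the principal obstacle''; as it stands it is a description of the difficulty, not an argument. It is also precisely where the known methods stop: the toric route fails in dimension $\ge 5$ for lack of a classification of toric Fano manifolds, and the paper's final remark records that generalizing the Monge--Amp\`ere approach of \cite{mannosalis} is an open problem.

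There are two further gaps. (i) Your formulation of Calabi's criterion --- $F$ a convergent power series with nonnegative coefficients --- characterizes K\"ahler immersion into $\CP^{\infty}$; immersion into a finite $\CP^N$ requires $F-1$ to be a \emph{finite} sum of squared moduli, i.e. $F$ a polynomial, as in Lemma \ref{1}. Under your weaker hypothesis the statement is false: flat $\C^n$, with $F=e^{x_1+\dots+x_n}$, is $S^1$-invariant, K\"ahler--Einstein and induced by $\CP^{\infty}$, yet it is not an open subset of a product of projective spaces; note also that your appeal to Hulin for $\lambda>0$ presupposes $N<\infty$. (ii) Your final bookkeeping yields $\lambda t_i=n_i+1$, i.e. block coefficients proportional to $n_i+1$; this is the correct Einstein condition on a product, but it does not ``pin down the coefficients as displayed'', because it is incompatible with the displayed formula $c_i=G/(n_i+1)$ whenever the $n_i+1$ are not all equal: on $\CP^2\times\CP^1$ your relation gives multiples of $3g_{FS}\oplus 2g_{FS}$, while the lcm formula gives $2g_{FS}\oplus 3g_{FS}$, and only the former is Einstein. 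The normalization consistent with your derivation is $c_i=(n_i+1)/\gcd(n_1+1,\dots,n_k+1)$; a complete proof would have to address this discrepancy explicitly (the statement's formula appears to be misstated), rather than assert agreement.
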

\begin{rem}\label{embedding}

The homogeneous spaces  $\big(\CP^{n_1}\times\dots\times\CP^{n_k},q (c_1g_{FS}\oplus\dots\oplus  c_kg_{FS})\big)$ are  embedded into  $\CP^{{n_1+q c_1\choose q c_1}\cdots{n_k+q c_k\choose q  c_k}-1}$. A \K embedding can be explicitly described through a composition of suitable normalizations of  Veronese embeddings:
\begin{align*}
(\CP^{n},cg_{FS})& \to (\CP^{\binom{n+c}{c}-1},g_{FS}) \\
[Z_i]_{0\leq i\leq n}& \mapsto\sqrt\frac{(c-1)!}{c^{c-2}} \left[\frac{Z_0^{c_0}\mathellipsis Z_n^{c_n}}{\sqrt{c_0!\mathellipsis c_n!}} \right]_{c_0+\mathellipsis +c_n=c},
\end{align*}
together with a {Segre embedding}  (cfr. \cite{Cal,loizedda}). \end{rem}

Our main result is

\begin{prop}\label{toric}
A $n$-complex dimensional \KE manifold endowed with a \inv projectively induced metric is an open subset of a compact toric \K manifold (i.e. a \K manifold equipped with an effective Hamiltonian action of the $n$-dimensional real torus).
\end{prop}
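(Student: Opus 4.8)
The plan is to exhibit the torus action explicitly from the rotational symmetry and then \emph{globalize} it by means of the projective embedding furnished by Calabi's criterion, the compactification step being where the real work lies.

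First I would extract a local $n$-torus from the hypothesis. Since the potential has the form $\Phi(|z_1|^2,\dots,|z_n|^2)$, each rotation $\rho_\theta\colon (z_1,\dots,z_n)\mapsto (e^{i\theta_1}z_1,\dots,e^{i\theta_n}z_n)$, $\theta\in(S^1)^n$, is holomorphic and leaves $\Phi$ invariant, hence preserves $\omega=\frac{i}{2}\de\bar\de\Phi$. The maps $\rho_\theta$ therefore constitute a local holomorphic and Hamiltonian action of $(S^1)^n$ on $U$, whose moment map is read off from $\Phi$. That the generic orbit is $n$-dimensional (so that the action is effective) is forced by the nondegeneracy of $g$: were $\Phi$ independent of some $|z_i|^2$, the Hermitian matrix $\big(\de^2\Phi/\de z_j\de\bar z_k\big)$ would be singular.

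Next I would appeal to Calabi's diastasis criterion. Because $\Phi$ depends only on the $|z_i|^2$, its diastasis centred at the origin is simply $D_0=\Phi-\Phi(0)$, and the power series of $e^{D_0}$ contains only diagonal monomials, $e^{D_0}=\sum_\alpha c_\alpha\,|z^\alpha|^2$ with $z^\alpha=z_1^{\alpha_1}\cdots z_n^{\alpha_n}$. The hypothesis that $g$ be projectively induced into a \emph{finite}-dimensional $\CP^N$ then forces, through Calabi's criterion and his rigidity theorem \cite{Cal}, all $c_\alpha\ge 0$ with only finitely many nonzero, and identifies the immersion, up to a unitary motion of $\CP^N$, with the \textbf{monomial} map $z\mapsto[\sqrt{c_\alpha}\,z^\alpha]_{c_\alpha>0}$. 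A degeneracy argument as above shows the exponents $\{\alpha:c_\alpha>0\}$ must span $\R^n$; in particular no flat factor can occur, consistently with the fact that a flat direction would require the infinitely many monomials of $e^{|z|^2}$ and hence an infinite-dimensional ambient space.

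Finally I would globalize and compactify. The monomial map intertwines the rotations $\rho_\theta$ on $U$ with the linear action $[\,w_\alpha\,]\mapsto[\,e^{i\langle\alpha,\theta\rangle}w_\alpha\,]$ on $\CP^N$, which preserves $g_{FS}$; thus the local action is the restriction of a globally defined effective Hamiltonian $(S^1)^n$-action on $\CP^N$. Taking the closure $X$ of $f(U)$ (equivalently, the closure of the orbit of $(\C^*)^n$ through a generic point) produces a compact, $(S^1)^n$-invariant projective toric variety containing the smooth submanifold $f(M)$ as an open subset of its regular locus, so that $M$ sits inside a compact toric \K space on which the effective Hamiltonian $n$-torus action restricts to $\rho_\theta$. \textbf{The main obstacle} is precisely this last passage: controlling the singularities of $X$ and realizing $M$ as an open subset of a genuinely \emph{smooth} compact toric manifold (passing, if needed, to a toric resolution) while keeping the torus action effective and Hamiltonian and the induced \K structure compatible. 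Everything upstream is essentially formal once the monomial embedding is in hand; the geometry of the toric compactification is where care is required.
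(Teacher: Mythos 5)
Your local analysis in the first two paragraphs is sound and coincides with Lemma \ref{1} of the paper, but the globalization step --- the one you yourself flag as ``the main obstacle'' --- is a genuine gap, not a technicality, and it cannot be closed along the lines you suggest. The decisive point is that your argument never uses the Einstein hypothesis: the monomial form of the potential and the closure construction are available for \emph{every} \inv projectively induced metric, while the proposition is false in that generality. Concretely, take $\Phi=\log\left(1+|z|^2+|z|^6\right)$ on $\C$, the pullback of $g_{FS}$ under the injective immersion $z\mapsto[1:z:z^3]$. The closure of the image is the cubic $\{W_0^2W_2=W_1^3\}\subset\CP^2$, with a cusp at $[0:0:1]$; in the parameter $w=1/z$ the induced metric near the cusp is $\lambda(w)\,|dw|^2$ with $\lambda(w)=|w|^2\left(4+9|w|^2+O(|w|^4)\right)$, whose Gaussian curvature behaves like $-\tfrac{9}{8}|w|^{-2}\to-\infty$. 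Since curvature is intrinsic, and a manifold sitting isometrically as an open subset of a compact \K manifold must have bounded curvature, this $\C$ is \emph{not} an open subset of any compact toric \K manifold --- no toric resolution can help, because the obstruction is metric, not complex-analytic. The same issues threaten your general construction: a resolution $\tilde X\to X$ is biholomorphic only over the regular locus, the pullback of $g_{FS}$ degenerates along exceptional fibres, and nothing in your argument guarantees that $f$ is globally injective or that $f(M)$ avoids $\mathrm{Sing}(X)$.

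What rules out such pathologies is precisely the \KE condition, and this is where the paper's proof takes a different, intrinsic route. From the Monge-Amp\`ere equation (Lemma \ref{2}) and the polynomial form \eqref{diastpol} one gets that the Einstein constant is positive, so $M$ is Fano; Hulin's theorem then extends $M$ to a \emph{complete} \KE manifold \K immersed into the same $\CP^N$, Bonnet--Myers gives compactness, and Kobayashi's theorem simple connectedness (Lemma \ref{3}). Only then is the symmetry globalized: the local Killing fields $\de_{\theta_k}$ arising in logarithmic coordinates extend to global Killing fields by Nomizu's theorem (using real-analyticity and simple connectedness), they commute because a nonzero Killing field cannot vanish on an open set, and they are Hamiltonian because $H^1_{\mathrm{dR}}(M)=0$. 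In short: the paper globalizes the \emph{torus action} on a smooth compact model supplied by the Einstein equation, whereas you globalize the \emph{embedding} and are left with a possibly singular closure that only the Einstein equation could tame --- a step your proposal does not carry out.
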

\begin{rem}
A simple example of non-toric compact \inv \KE manifold can be the compact complex torus $\C^n/\Z^n$ together with the metric induced by the Euclidean metric on $\C^n$.
\end{rem} 

From Arezzo-Loi-Zuddas' classification of lower dimensional compact Fano toric  manifolds admitting a \KE projectively induced metric (see Subsection \ref{alz}), we get the following corollary, namely we proved that Conjecture \ref{conj} holds true for manifolds with dimension less than five.
\begin{corol}\label{main}
Let $M$ be a  projectively induced and \inv \KE manifold with complex dimension less than 5. Then $M$ is an open subsets of one of the following manifolds endowed with an integer multiple of the metric below indicated:
\begin{itemize}
\item if  $\dim M=2$, $(\CP^2,g_{FS}) \text{ or } (\CP^1\times \CP^1, g_{FS}\oplus g_{FS});$
\item if  $\dim M=3$, $(\CP^3,g_{FS}),\  (\CP^2\times \CP^1, 2g_{FS}\oplus 3g_{FS}) \text{ or }$ $ (\CP^1\times \CP^1 \times \CP^1, g_{FS}\oplus g_{FS}\oplus g_{FS});$
\item if  $\dim M=4$, $(\CP^4,g_{FS}),\  (\CP^3\times \CP^1, g_{FS}\oplus 2 g_{FS}),\  (\CP^2\times \CP^2, g_{FS}\oplus g_{FS}),\   (\CP^2\times \CP^1\times \CP^1, 2g_{FS}\oplus 3g_{FS}\oplus 3g_{FS}),\text{ or }(\CP^1\times \CP^1 \times \CP^1 \times \CP^1, g_{FS}\oplus g_{FS}\oplus g_{FS}\oplus g_{FS}).$
\end{itemize}
\end{corol}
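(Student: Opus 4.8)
The plan is to read off the statement from Proposition \ref{toric} together with the Arezzo--Loi--Zuddas classification recalled in Subsection \ref{alz}; the only genuine work lies in transporting the \KE and projectively induced hypotheses from the open subset $M$ to its compactification, after which the conclusion is a direct lookup against a finite list.

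First I would apply Proposition \ref{toric} to realize $M$ as an open subset of a compact toric \K manifold $\tilde M$, so that the given metric $g$ on $M$ is the restriction of a \K metric $\tilde g$ defined on all of $\tilde M$. The key point is to show that $(\tilde M,\tilde g)$ is again \KE and projectively induced. For the Einstein condition I would exploit the real-analyticity of \KE metrics: the tensor identity $\Ric(\tilde g)=\lambda\,\tilde g$ holds on the open dense subset $M$, and by analyticity together with connectedness of $\tilde M$ it propagates to the whole manifold. For projective inducedness I would invoke Calabi's rigidity and the analytic continuation of the diastasis function, so that the local \K immersion of $M$ into $(\CP^N,g_{FS})$ extends to a global \K immersion of the compact manifold $\tilde M$; hence $\tilde g$ is itself projectively induced.

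Next I would record that a compact \KE manifold admitting a \K immersion into $(\CP^N,g_{FS})$ necessarily has positive Einstein constant, by standard results on \KE submanifolds of projective space (see \cite{hulin,hulinlambda}). Consequently $\tilde M$ is Fano, and $(\tilde M,\tilde g)$ is a compact Fano toric \KE manifold carrying a projectively induced metric, with $\dim_\C\tilde M=\dim_\C M\le 4$. I would then feed $\tilde M$ into the Arezzo--Loi--Zuddas classification of Subsection \ref{alz}, which enumerates exactly such manifolds up to complex dimension four and identifies them with products of projective spaces equipped with the integer multiples of the balanced metrics appearing in Conjecture \ref{conj}. Matching dimensions $2$, $3$ and $4$ against that list produces precisely the manifolds and metrics stated, and since $M$ is open in $\tilde M$ the claimed form of $M$ follows.

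The step I expect to be the main obstacle is the extension argument: one must be certain that both the Einstein equation and the projective immersion genuinely propagate from the open piece $M$ to the entire toric compactification $\tilde M$, and not merely to some intermediate analytic completion. This rests essentially on the real-analyticity of the data and on Calabi's rigidity theorem for \K immersions into $\CP^N$; once it is secured, invoking the finite classification is routine.
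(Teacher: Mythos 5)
Your overall skeleton coincides with the paper's (Proposition \ref{toric}, then the Arezzo--Loi--Zuddas classification of Subsection \ref{alz}, then normalization of coefficients), but the step you yourself single out as the crux --- transporting the \KE and projectively induced hypotheses to the compact toric model $\tilde M$ --- is handled by an argument that fails, and in fact cannot be repaired at that level of generality. If all you use is the bare statement of Proposition \ref{toric}, then $\tilde M$ is just \emph{some} compact toric \K manifold containing $M$ as an open subset; such an extension is far from unique, and whenever $\tilde M\setminus\overline{M}$ has nonempty interior one can replace $\tilde g$ by $\tilde g+i\partial\bar\partial\rho$ for a small torus-invariant bump function $\rho$ supported there, obtaining a compact toric \K extension of $g$ that is not Einstein. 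So the claim that $\Ric(\tilde g)=\lambda\tilde g$ ``propagates'' to $\tilde M$ is false for an arbitrary extension. Your argument tacitly assumes two things that Proposition \ref{toric} does not provide: that $M$ is \emph{dense} in $\tilde M$, and that $\tilde g$ is real-analytic on all of $\tilde M$ (the regularity theory you invoke applies only where the metric is already known to be \KE, i.e.\ on $M$, so using it on $\tilde M$ is circular). The same objection hits your Calabi-rigidity step: analytic continuation of the diastasis requires real-analyticity of $\tilde g$ away from $M$, which you do not have.

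The paper resolves this upstream and in the opposite logical order: one does not propagate the hypotheses onto a pre-given compactification, one \emph{constructs} the compactification so that it carries the right metric. In Lemma \ref{3}, Hulin's theorem \cite{hulin} extends $(M,g)$ to a \emph{complete} \KE manifold \K immersed into the same $\CP^N$; the Einstein constant is positive (via the degree comparison in the Monge-Amp\`ere equation \eqref{MA}, cf.\ \cite{mannosalis} --- your citation of \cite{hulinlambda} would serve equally well here), so Bonnet--Myers gives compactness and \cite{koricci} gives simple connectedness. The paper then assumes without loss of generality that $M$ itself is this compact, simply connected manifold, and the proof of Proposition \ref{toric} endows this very manifold with the toric structure. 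Hence at the stage of Corollary \ref{main} there is nothing left to propagate: the compact toric Fano manifold already is \KE and projectively induced by construction, the classification of \cite{ALZ} applies to it directly, and the specific metrics listed (e.g.\ $2g_{FS}\oplus 3g_{FS}$ on $\CP^2\times\CP^1$) come from Calabi's theorem \cite{Cal} that only integer multiples of $g_{FS}$ are projectively induced, combined with the elementary computation of the coefficients making a product metric Einstein --- a final step your proposal also leaves implicit.
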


The paper is organized as follow. We are going to recall the definition and some basic facts about Calabi's diastasis function and Bochner's coordinates that will be used in the proof of   Lemmas \ref{1} and \ref{2}. This two auxiliary lemmas are particularly important in the proof of compactness and simple connectedness of complete \KE \inv projectively induced manifolds (Lemma \ref{3}), topological properties that turn out to be fundamental in the construction of an integrable distribution, used in the proof of Proposition \ref{toric}. Finally, the proof of Corollary \ref{main} is given.

\section{Proof of Proposition \ref{toric} and Corollary \ref{main}}
 It is worth to point out that the condition of being a \KE metric reads locally, with respect to any local holomorphic coordinates, as a  nonlinear overdetermined system of fourth-order elliptic PDEs. From regularity results for elliptic PDEs, we have the real-analyticity  of  any \K potential  (see e.g. \cite{aubin}). 
 We easily see that we can assume without loss of generality that the \inv  \K potential $\Phi$ is the so-called Calabi's diastasis function (cfr. \cite{bochner, Cal, loizedda}), namely the only \K potential whose Taylor expansion does not contain zero and first order terms. Moreover, in our case,  coordinates $z=(z_1,\mathellipsis,z_n)$  can be chosen in order to have the following
\begin{equation*}
\Phi(z)=\sum_{\alpha=1}^n|z_\alpha|^2+\psi_{2,2},
\end{equation*}
where $\psi_{2,2}$ is a power series with degree $\geq 2$ in both $z$ and $\bar z$. 
Such coordinates are uniquely defined up to unitary transformations (see \cite{loizedda}) and they are called Bochner's coordinates  with respect to the origin of the open set $U\subseteq\C^n$, where is defined $\Phi$ (cfr. \eqref{U}).

\begin{lem}\label{1}
Let $\Phi$ be a \inv \K potential of a projectively induced \inv \K metric. Then
\begin{equation}\label{diastpol}
\Phi=\log\left(1+\sum_{j=1}^n|z_j|^2+\sum_{j=n+1}^N a_j|z^{m_j}|^2\right)
\end{equation}
where $a_j\in\R^+$, $m_i\in\N^n$ and $m_j\neq m_i$ for $j\neq i$.
\end{lem}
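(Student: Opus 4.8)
The plan is to combine Calabi's criterion for projectively induced metrics with the torus symmetry encoded in the $S^1$-invariant form of $\Phi$. Recall that, by Calabi's theory, the diastasis of a projectively induced metric is of ``positive type'': writing the Taylor expansion at the origin as $e^\Phi - 1 = \sum_{m,m'} b_{m,m'}\, z^m \bar z^{m'}$ (with $m, m' \in \N^n$), the metric is projectively induced precisely when the Hermitian matrix $B = (b_{m,m'})$ is positive semidefinite, equivalently $e^\Phi - 1 = \sum_j |\varphi_j|^2$ for holomorphic $\varphi_j$ vanishing at the origin. I would open by recording this characterization, noting that $\Phi(0) = 0$ gives $b_{0,0} = 0$.

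The first substantive step is to use the invariance. Because $\Phi$ depends only on $|z_1|^2, \dots, |z_n|^2$, so does $e^\Phi - 1$, hence it is invariant under $z_k \mapsto e^{i\theta_k}z_k$ for all $\theta \in \R^n$. Since $z^m \bar z^{m'}$ transforms by the factor $e^{i(m-m')\cdot\theta}$, invariance forces $b_{m,m'} = 0$ whenever $m \neq m'$; thus $B$ is diagonal in the monomial basis. Positive semidefiniteness then reduces to nonnegativity of the diagonal entries, giving
\begin{equation*}
e^\Phi - 1 = \sum_{m \neq 0} a_m\, |z^m|^2, \qquad a_m := b_{m,m} \geq 0.
\end{equation*}
A useful consequence of diagonality is that the rank of $B$ equals the number of nonzero $a_m$ (distinct monomials being linearly independent); since this rank is the minimal dimension of a projective space into which the metric embeds, the finiteness of the target forces only finitely many $a_m$ to be nonzero, so the sum is finite.

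I would then fix the privileged coefficients via the Bochner normalization $\Phi = \sum_{\alpha=1}^n |z_\alpha|^2 + \psi_{2,2}$ recalled above. Expanding $\Phi = \log\big(1 + \sum_{m\neq 0} a_m |z^m|^2\big)$ and isolating the terms of bidegree $(1,1)$ with $|m| = 1$, only the linear part of the logarithm contributes (higher powers of $\sum a_m |z^m|^2$ start at strictly larger bidegree), so the coefficient of $|z_\alpha|^2$ is exactly $a_{e_\alpha}$. Matching against $\psi_{2,2}$, which has degree $\geq 2$ in both $z$ and $\bar z$, yields $a_{e_\alpha} = 1$ for $\alpha = 1, \dots, n$. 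Labelling these unit-coefficient linear monomials as the first $n$ summands and the remaining monomials with $a_m > 0$ as $a_j |z^{m_j}|^2$, $j = n+1, \dots, N$, with the multi-indices $m_j$ pairwise distinct, produces exactly \eqref{diastpol}.

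The main obstacle I anticipate is making the reduction watertight rather than the computations themselves: one must ensure the intrinsic $S^1$-symmetry of the diastasis is preserved in Bochner's coordinates (these being determined up to unitary transformations, which respect the symmetry) and argue cleanly that diagonality of $B$ both delivers the nonnegative real coefficients and, via the rank count, the finiteness of $N$. The degree bookkeeping in the logarithmic expansion is then routine.
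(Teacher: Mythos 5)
Your proof is correct, but it follows a genuinely different route through Calabi's theory than the paper does. The paper argues extrinsically: it fixes a full \K immersion $f\colon M\to\CP^N$, invokes Calabi's rigidity theorem to obtain Bochner coordinates $(w_1,\dots,w_N)$ on the ambient $\CP^N$ adapted to $f$ (so that $w_j|_{f(V)}=z_j$ for $j\leq n$ and $w_j|_{f(V)}=f_j(z)$ for $j>n$, with $f_j$ vanishing to second order), and then uses the hereditary property of the diastasis, $D^g_p=D^{g_{FS}}_{f(p)}\circ f$, to write $\Phi=\log\bigl(1+\sum_{j\leq n}|z_j|^2+\sum_{j>n}|f_j(z)|^2\bigr)$; the reduction of $\sum_j|f_j|^2$ to a sum of monomials is then compressed into the single sentence ``Since $\Phi$ is $S^1$-invariant and $f$ is full, we get our statement.'' You instead work intrinsically with Calabi's embeddability criterion: invariance kills the off-diagonal entries of the coefficient matrix of $e^\Phi-1$, positive semidefiniteness makes the surviving diagonal entries nonnegative reals, the rank bound (rank $=$ minimal dimension of a full projective target) gives finiteness of the sum, and the Bochner normalization pins the linear coefficients to $1$. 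The two arguments are close cousins: your diagonalization is exactly what is hidden in the paper's last sentence, since writing $f_j=\sum_m c_{j,m}z^m$ there, invariance forces the vectors $(c_{j,m})_j$ attached to distinct monomials to be orthogonal, whence $\sum_j|f_j|^2=\sum_m a_m|z^m|^2$. What your version buys is that it never touches the immersion and makes the omitted step explicit; what it costs is the identification of the number of summands with the target dimension of the full immersion, which the extrinsic argument gives for free. One small imprecision in your final paragraph: Bochner coordinates being determined ``up to unitary transformations'' does not by itself protect the symmetry, since a generic unitary destroys the form $\Phi(|z_1|^2,\dots,|z_n|^2)$; the correct observation is that an invariant potential automatically has no bidegree-$(1,k)$ terms with $k\geq 2$ and a diagonal $(1,1)$-part, so a diagonal rescaling---which does preserve invariance---already yields Bochner coordinates. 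This is precisely the setup the paper establishes before the lemma, so it is not a gap in your argument for the lemma itself.
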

\begin{proof}
Let $f:(M,g)\to(\CP^N,g_{FS})$ be a \K immersion. We are going to assume without loss of generality that $f$ is full, namely $f(M)$ is not contained into any $\CP^h$ with $h<N$. Let $w=(z_1,\mathellipsis,z_{\dim M})$ be  Bochner's coordinates on $V\subseteq M$.
 By \cite{Cal}, there exists a   Bochner's coordinate system $(w_1,\mathellipsis,w_{ N})$ on $\CP^N$ such that
$$w_1|_{f(V)}= z_1, \mathellipsis, w_{ \dim M}|_{f(V)}= z_{\dim M},$$
and
$$ w_{\dim M+1}|_{f(V)}= f_{\dim M+1}(z), \mathellipsis, w_{ N}|_{f(V)}= f_{ N}(z),$$ where $f_j$ are holomorphic functions that vanish at the second order at the origin.

The hereditary property of Calabi's diastasis (see \cite{Cal}) states that Calabi's diastasis function $D^g_p$ for $g$ around a point $p\in M$ needs to satisfy
 $$D^g_p = D^{g_{FS}}_{f(p)}\circ f.$$
Therefore, we have that $\Phi$ needs to assume the following form
$$\Phi=\log\left(1+\sum_{j=1}^n|z_j|^2+\sum_{j=n+1}^N|f_j(z)|^2\right).$$
Since $\Phi$ is \inv and $f$ is full, we get our statement.
\end{proof}

\begin{lem}\label{2}
Let $(M,g)$ be a \KE \inv manifold. Its \inv \K potential satisfy the following Monge-Ampère equation
\begin{equation}\label{MA}
\det g=e^{-\frac{\lambda}{2}\Phi},\end{equation}
for a suitable real constant $\lambda$.
\end{lem}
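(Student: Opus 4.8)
The plan is to turn the Einstein condition into a scalar identity for the potential and then integrate that identity using the rotational symmetry. Recall that, under the normalization $\omega=\frac{i}{2}\partial\bar\partial\Phi$ for which the metric coefficients are $g_{i\bar j}=\partial_i\partial_{\bar j}\Phi$, the Ricci form is the closed $(1,1)$-form $\Ric(\omega)=-i\,\partial\bar\partial\log\det(g_{i\bar j})$. Hence the \KE equation $\Ric(\omega)=\lambda\,\omega$, with $\lambda$ the Einstein constant, reads $-\partial_i\partial_{\bar j}\log\det g=\frac{\lambda}{2}\partial_i\partial_{\bar j}\Phi$ for all $i,j$. Equivalently, setting
\begin{equation*}
u:=\log\det g+\tfrac{\lambda}{2}\,\Phi ,
\end{equation*}
the real-analytic function $u$ satisfies $\partial_i\partial_{\bar j}u\equiv 0$, i.e. it is pluriharmonic, so on the coordinate domain it equals $h+\bar h$ for some holomorphic $h$. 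The asserted equation \eqref{MA} is then precisely the statement that $u\equiv 0$, so it remains to show this.

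Next I would prove that $u$ is invariant under the rotations $z_\alpha\mapsto e^{i\vartheta_\alpha}z_\alpha$ and conclude that it is constant. The potential is invariant by hypothesis, being a function of $|z_1|^2,\dots,|z_n|^2$. For the determinant, writing $g_{i\bar j}=F_i\,\delta_{ij}+F_{ij}\,z_j\bar z_i$ with $F_i=\partial\Phi/\partial x_i$ and $x_i=|z_i|^2$, each entry transforms as $g_{i\bar j}\mapsto e^{i(\vartheta_j-\vartheta_i)}g_{i\bar j}$, and in every term $\prod_i g_{i\,\overline{\sigma(i)}}$ of the cofactor expansion the phases cancel since $\sum_i(\vartheta_{\sigma(i)}-\vartheta_i)=0$; thus $\det g$, and therefore $u$, is rotation-invariant. (Alternatively, both $\omega^n$ and the Euclidean volume form are rotation-invariant, which forces the invariance of $\det g$.) Now compare Taylor monomials: a function of the $|z_\alpha|^2$ contains only monomials $z^\alpha\bar z^\beta$ with $\alpha=\beta$, whereas $h+\bar h$ contains only pure monomials $z^\alpha$ and $\bar z^\beta$; the two expansions can agree only in the constant term, so $u$ is constant.

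Finally I would pin down the value of that constant with the Bochner normalization. In the coordinates of the excerpt, $\Phi=\sum_\alpha|z_\alpha|^2+\psi_{2,2}$, so $\Phi(0)=0$ and $g_{i\bar j}(0)=\delta_{ij}$, whence $\det g(0)=1$ and $u(0)=\log 1+\frac{\lambda}{2}\cdot 0=0$. Since $u$ is constant, $u\equiv 0$, i.e. $\log\det g=-\frac{\lambda}{2}\Phi$, which is \eqref{MA}.

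I expect the invariance step to be the crux: one must check that it is $\det g$, and not merely $\Phi$, that is rotation-invariant, and that an invariant pluriharmonic function is necessarily constant. Once these two facts are established the argument is essentially bookkeeping, with the Bochner normalization supplying the vanishing of the constant of integration and the factor $\tfrac12$ in the exponent tracking the chosen normalization $\omega=\frac{i}{2}\partial\bar\partial\Phi$ of the \K form, while $\lambda$ is identified with the Einstein constant.
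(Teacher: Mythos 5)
Your proof is correct and follows essentially the same route as the paper: the K\"ahler--Einstein condition makes $\log\det g+\frac{\lambda}{2}\Phi$ pluriharmonic, hence of the form $f+\bar f$ with $f$ holomorphic, and a Taylor-series comparison in Bochner's coordinates forces $f+\bar f\equiv 0$. Your explicit use of the rotation invariance of $\det g$ (phases cancelling in the determinant expansion) together with the normalization $\Phi(0)=0$, $g_{i\bar j}(0)=\delta_{ij}$ is exactly the detail the paper compresses into the sentence ``by comparing the series expansions of both sides \dots it follows $f+\bar f\equiv 0$.''
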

\begin{proof}
Since $g$ is a \KE metric, there exists a suitable $\lambda\in \R$ such that 
$$-i\de\bar\de\log\det g=\frac{i\lambda}{2}\de\bar\de\Phi.$$
Hence, there exists a holomorphic function $f:U\to\C$, defined on a suitably small open subset $U$ of $M$,  such that
$$\det g=e^{-\frac{\lambda}{2}\Phi +f+\bar f}.$$
Once Bochner's coordinates are set, by comparing the series expansions of both sides of the previous equation, it follows $f+\bar f\equiv 0$.
\end{proof}

\begin{lem}\label{3}
A \KE manifold endowed with a \inv projectively induced metric is an open subset of  a compact and simply connected manifold.
\end{lem}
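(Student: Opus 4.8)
The plan is to read off the sign of the Einstein constant $\lambda$ from the two auxiliary lemmas and then feed it into classical comparison and vanishing theorems. First I would show that $\lambda>0$. Substituting the explicit form of $\Phi$ from Lemma \ref{1} into the Monge–Ampère equation \eqref{MA} turns the latter into the real-analytic identity
\begin{equation*}
\det\left(\de_\alpha\de_{\bar\beta}\log(1+P)\right)=(1+P)^{-\lambda/2},\qquad P:=\sum_{j=1}^n|z_j|^2+\sum_{j=n+1}^N a_j|z^{m_j}|^2 .
\end{equation*}
Both sides are real-analytic and coincide on the open set where the Bochner coordinates live, so the identity propagates to the entire domain of the right-hand side, i.e.\ to all of $\C^n$. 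I would then analyze the asymptotics of the left-hand side as $\|z\|\to\infty$: expanding the Hessian determinant of $\log(1+P)$ via the matrix determinant lemma shows that the top-order contributions cancel and the determinant decays like a strictly negative power of $1+P$. This is compatible with $(1+P)^{-\lambda/2}$ only if $\lambda>0$ (the cases $\lambda=0$ and $\lambda<0$ would force the volume density to be constant or to blow up, contradicting the decay). In the model case $P=\|z\|^2$ this recovers $\lambda=2(n+1)$, the Fano value of $\CP^n$.

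With $\lambda>0$ in hand, I would next treat the complete case, to which the statement reduces. A complete \KE manifold with $\Ric=\lambda g$ and $\lambda>0$ has Ricci curvature bounded below by the positive constant $\lambda$, so Myers' theorem forces it to be compact with finite fundamental group. Compactness together with $\Ric=\lambda g>0$ yields positivity of the first Chern class, hence the manifold is Fano; Kobayashi's theorem on compact \K manifolds with positive definite Ricci tensor then upgrades finiteness of $\pi_1$ to simple connectedness. This disposes of the complete case.

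To reach an arbitrary $M$ as in the statement I would use again the real-analyticity of $\Phi$ and the polynomial shape given by Lemma \ref{1}: the local \K immersion extends to the entire polynomial map $z\mapsto\big[1:z_1:\dots:z_n:\sqrt{a_{n+1}}\,z^{m_{n+1}}:\dots:\sqrt{a_N}\,z^{m_N}\big]$ of $\C^n$ into $\CP^N$, whose image lies in a (compact) projective variety. Combining this with the analytic continuation of the potential, I would produce a maximal complete \inv \KE projectively induced manifold $\widehat M$ containing $(M,g)$ as an open subset, and then apply the complete case to $\widehat M$ to conclude that $\widehat M$ is compact and simply connected, exactly the asserted conclusion.

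The hard part will be the first step together with the completion in the third. Making the asymptotic cancellation rigorous requires controlling the subleading order of $\det\big(\de_\alpha\de_{\bar\beta}\log(1+P)\big)$ once the leading terms cancel, and one must ensure that the analytic continuation of the potential genuinely assembles into a \emph{smooth complete} manifold rather than merely a possibly singular projective image on which the Einstein structure could degenerate. Establishing $\lambda>0$ is the pivotal point, since everything afterwards is a direct invocation of Myers' and Kobayashi's theorems.
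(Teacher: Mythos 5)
Your overall skeleton agrees with the paper's proof: derive $\lambda>0$ from the Monge--Amp\`ere equation \eqref{MA} combined with the polynomial form of the potential from Lemma \ref{1}, reduce to the complete case, and then invoke Myers' theorem for compactness and Kobayashi's theorem \cite{koricci} for simple connectedness. Your first step is essentially the paper's argument in asymptotic dress: writing $\det g = Q/(1+P)^{n+1}$ with $Q$ a polynomial of total degree strictly smaller than that of $(1+P)^{n+1}$ is exactly the degree comparison the paper performs (details deferred to \cite{mannosalis}), and it does force $\lambda>0$. The Myers and Kobayashi steps are likewise as in the paper.

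The genuine gap is the reduction to the complete case, which you yourself flag as ``the hard part'' but never close. The paper handles it by citing a theorem of Hulin \cite{hulin}: every K\"ahler--Einstein manifold K\"ahler immersed into $\CP^N$ extends to a \emph{complete} K\"ahler--Einstein manifold K\"ahler immersed into the same $\CP^N$. This is a substantive theorem with a delicate proof, and it is not recoverable from the soft devices you propose. Taking the closure of the image of the monomial map $z\mapsto\bigl[1:z_1:\dots:\sqrt{a_N}\,z^{m_N}\bigr]$ produces a projective variety that is in general singular along the boundary (already for curves: the closure of the image of $z\mapsto[1:z:z^3]$ has a cusp at $[0:0:1]$), and even where that closure happens to be smooth you have no a priori control that the induced metric on it is complete, nor that the Einstein condition survives at boundary points where the immersion could degenerate. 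Likewise, a ``maximal complete extension $\widehat M$'' does not exist by abstract nonsense: an incomplete Riemannian (or K\"ahler--Einstein) manifold need not embed isometrically as an open subset of any complete one, and proving that it does in this projectively induced K\"ahler--Einstein setting is precisely the content of Hulin's theorem. As written, your argument establishes the lemma only under the additional hypothesis that $(M,g)$ is complete; the general statement requires \cite{hulin} (or an equivalent extension result) as input.
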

\begin{proof}
 Since the previous equation \eqref{MA} can be seen as an equality between polynomials in $z$ and $\bar z$, by comparing their degrees, we get $\lambda$  needs to be positive (for details see \cite{mannosalis}), namely $M$ is a  Fano manifold.

 D. Hulin proved in \cite{hulin} that
every \KE manifold \K immersed into a complex projective space can be extended to a  complete \KE manifold, \K immersed into the same complex projective space.
Therefore, by Bonnet-Myers' theorem, $M$ needs to be   {compact}. Moreover, every compact \K manifold with positive definite Ricci tensor is simply connected by \cite{koricci}.
\end{proof}

From now on, we are going to consider  without loss generality only metrics defined on complete, compact and simply connected manifolds.

\begin{proof}[Proof of Proposition~\ref{toric}]
Let $\phi:U\cap(\C\setminus\{0\})^n \to \R$ such that $$2\phi(\log{|z_1|^2},\mathellipsis, \log{|z_n|^2})=\Phi\left(|z_1|^2,\dots,|z_n|^2\right).$$ 
Chosen any branch of the complex logarithm,  we  set holomorphic coordinates $w_i=\log z_i$. Hence,  the \K form reads locally as\footnote{We are going to use the Einstein convention on repeated indices.}
$$\omega=i \de\bar \de \phi=i\frac{\de^2 \phi}{\de w_k \de \bar w_j}dw_k\wedge d\bar w_j=\frac{1}{2}\frac{\de^2 \phi}{\de r_k \de  r_j}dr_k\wedge d \theta_j,$$
where $w_k=r_k+i \theta_k$. Therefore,
$$g=\phi^{kj}(r) dr_kdr_j+\phi^{kj}(r)d\theta_k d \theta_j,$$
where $\phi^{kj}(r)=\frac{\de^2 \phi}{\de r_k \de  r_j}$. We easily see that $\de_{\theta_k}$ are Killing vector fields  on  $U$. Being $M$ simply connected and real analytic, each Killing vector field can be extended to a unique Killing vector field defined on the whole manifold $M$ (see \cite{nomizu}  theorems 1 and 2). We denote with $X_k$ the global extension of $\de_{\theta_k}$.  We recall that every Killing vector field on a compact \K manifold is real holomorphic (see e.g.  \cite{Mor} prop. 9.5), i.e.  if we identify $TM$   with $T^{(1,0)}M\subset TM\otimes\C$ in the usual way (namely by identifying $Z\in TM$ with $\tilde Z=(Z-i JZ)/2\in T^{(1,0)}M$), we have that $\tilde X_k=(X_k-i JX_k)/2$  is holomorphic.

Since $M$ is complete, we get a holomorphic and isometric action of $\R$ on $M$ by means of the flows of the each Killing vector field $X_k$. Furthermore, for any $1\leq k,j\leq n$, $[ X_k, X_j]$ is a Killing vector field vanishing on $U$. Since Killing vector fields (different from the identically zero vector field) vanish on totally geodesic submanifolds of real codimension at least 2 (cfr.  \cite{kob}), the commutator $[ X_k,  X_j]$ needs to vanish everywhere on $M$. Therefore, we have a transitive holomorphic and isometric action  $\mathcal{G}$ of $\R^n$ on $M$.

Let $V\subset M$ be the subset where at least a Killing vector field $X_k$ vanishes. By considering that $M\setminus V$ consists of and only of maximal dimensional orbits of the action $\mathcal{G}$, such action can be restricted to $M\setminus V$. We easily see that $\Z^n$ is the stabilizer of $\mathcal{G}$ in $(M\setminus V)\cap U$. Hence, we have a holomorphic and isometric action of the real torus  $\R^n/\Z^n$ on $M\setminus V$. Since  stabilizers of $\mathcal{G}$ in points belonging to $V$ contain $\Z^n$, this toric  action can be extended to the whole manifold $M$.

Every Killing vector field on a compact and simply connected \K manifold $M$, is Hamiltonian. Indeed, since these vector fields are also real holomorphic because $M$ is compact, they need  to be symplectic too, i.e. $i_{X_k}\omega$ is closed. Being $M$ simply connected, $H^1_{\text{dR}}(M)=0$. Therefore,  $i_{X_i}\omega$ needs to be also  exact. Therefore,  the action of $\R^n/\Z^n$ is Hamiltonian\footnote{ We easily see that the action is effective. Indeed, if $p\in(M\setminus V)\cap U$, then $g\cdot p\neq p$ for every $g\in \R^n/\Z^n$.}.

 The existence of such  action allow us to conclude that $M$ is a toric \K manifold.
\end{proof}

\begin{proof} [Proof of Corollary~\ref{main}] C. Arezzo, A. Loi, F. Zuddas proved in  \cite{ALZ} that the only compact toric Fano manifolds with dimension less than five, that can be endowed with a \KE projectively induced metric, are  (or can be decomposed as a product of) complex projective spaces with  multiples of the Fubini-Study metric.  By  \cite{Cal},  only integer multiples of the Fubini-Study metric are projectively induced. Therefore, we get the statement of  Corollary \ref{main} by straightforward computations of normalization coefficients necessary to obtain an Einstein metric on the product of complex projective spaces.\end{proof}

  In the following section we provide a sketch of the technique used by  C. Arezzo, A. Loi, F. Zuddas  to get their result.
\subsection{Arezzo-Loi-Zuddas' result}\label{alz}
 Let $H_k$ be a  real function on $M$  such that
$$i_{X_k}\omega=-dH_k, $$
where $X_k$ are symplectic vector fields related to the toric action on $M$ as described in the proof of Proposition \ref{toric}.
A  moment map $\mu:M\to\R^n$ for such action can be written  as $$\mu(x)=\left(H_1(x),\mathellipsis,H_n(x)\right).$$
By  Atiyah and Guillemin-Sternberg convexity theorems \cite{at, guil}, the imagine  of a compact symplectic manifold through  a moment map for a Hamiltonian action of a torus, is the convex hull\footnote{The smallest convex set of $\R^n$ containing such points.} of the images of the fixed points of the action. Moreover, T. Delzant proved  (see \cite{del} or \cite{don}) that momentum images of toric symplectic manifolds need to satisfy some additional properties, namely they turn out to be  the so-called Delzant polytopes. We recall that a polytope is a convex subset of $\R^n$ that can be represented as intersection of a finite number of half-spaces.
\begin{dfn}
 Let $\Gamma$ be the group of maps $\R^n\to\R^n$ of the form $x\mapsto Ax+b$, where $A\in\mathrm{SL}_n(\Z)$ and $b\in\R^n$.  A Delzant polytope $\Delta$ is  a convex polytope such that a neighbourhood of any vertex of $\Delta$ is equivalent via $\Gamma$ to a neighbourhood of the origin of the infinite polytope $(\R^+_0)^n$. Moreover, all vertices of the polytope are integers.
\end{dfn}
\begin{figure}
\begin{subfigure}{.5\textwidth}
  \centering
  \includegraphics[width=.8\linewidth]{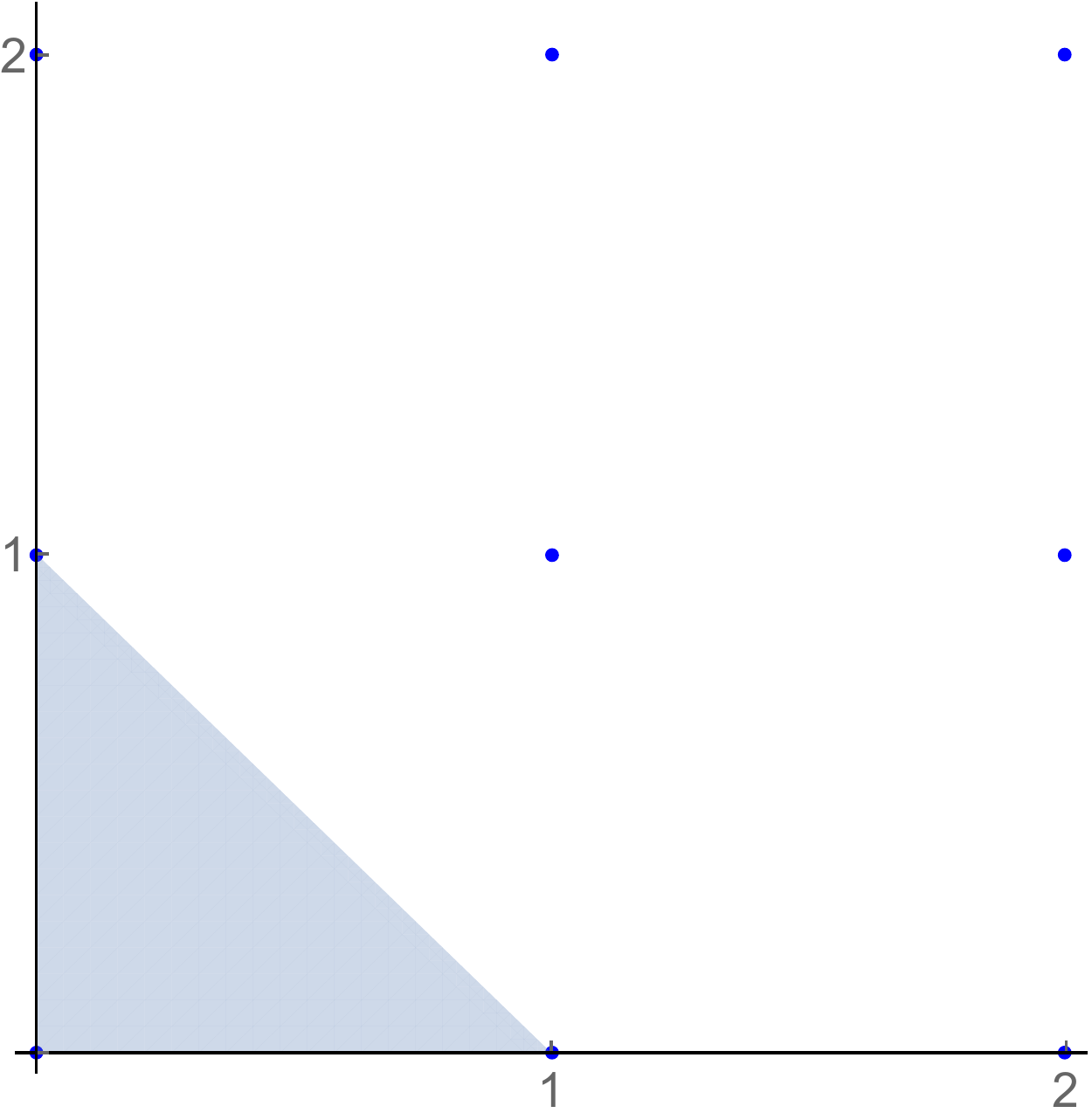}
%  \caption{$\CP^2$}
\end{subfigure}%
\begin{subfigure}{.5\textwidth}
  \centering
  \includegraphics[width=.8\linewidth]{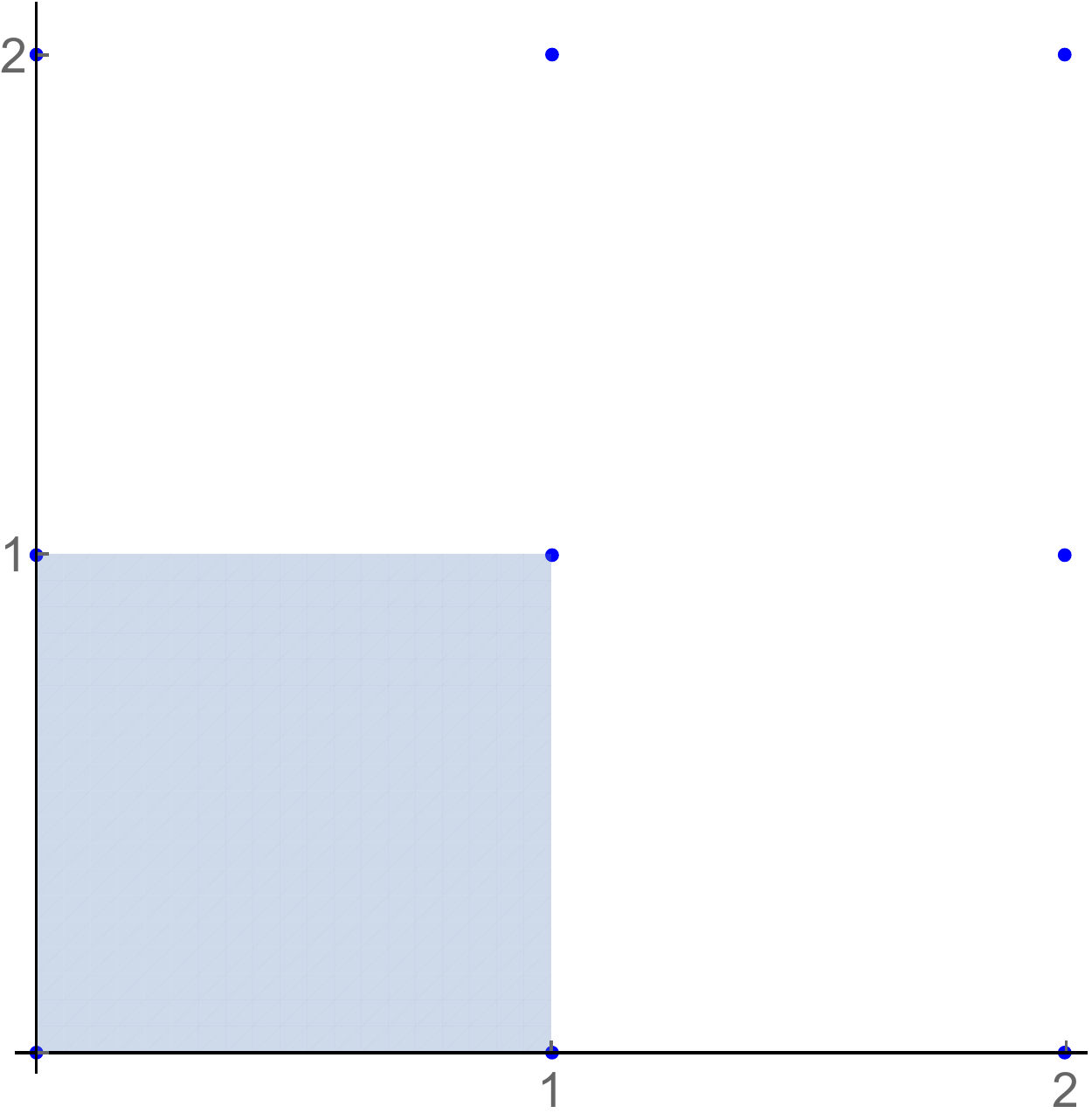}
   % \caption{$\CP^1\times\CP^1$ }
\end{subfigure}
\begin{subfigure}{.5\textwidth}
  \centering
  \includegraphics[width=.8\linewidth]{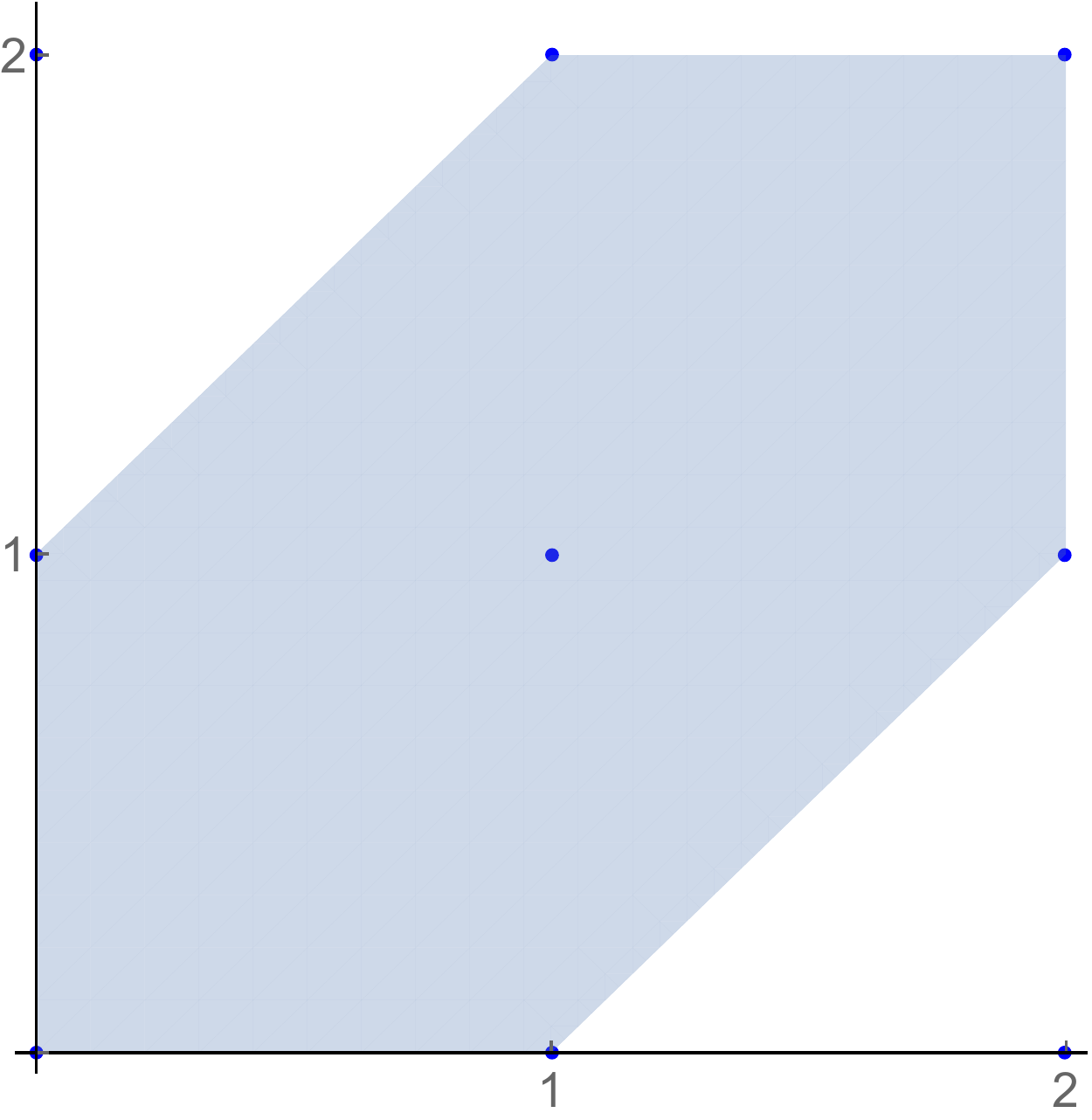}
    %\caption{ $\CP^2\#3\overline{\CP^2}$}
\end{subfigure}
\caption{Examples of Delzant polytopes related to compact complex toric surfaces.}
\label{fig:fig}
\end{figure}

Vice versa, we have to recall that there exists a canonical way to associate  every  Delzant polytope with a compact smooth projective variety, containing a non-compact complex torus $(\C^*)^n$ as an open dense subset and whose natural action of $(\C^*)^n$ on itself can be holomorphically extended to the whole manifold (see e.g. \cite{don}). 
Moreover, Delzant showed in \cite{del} how such manifolds can be canonically endowed with a symplectic structure so that the trivial action of the real torus on that projective variety is Hamiltonian and moment imagine is the given Delzant polytope. Furthermore, he proved the existence of a bijective correspondence between compact symplectic toric manifolds (up to symplectomorphisms) and Delzant polytopes (up to transformations in $\Gamma$). 

Hence,  from now on, we can  consider without loss of generality \emph{only Delzant polytopes  having a vertex in the origin of $\R^n$ and its edges meeting at the origin agreeing  with the positive semi axes of $\R^n$} (see. e.g. Figure \ref{fig:fig}).\\

The following lemma has a key role in  Arezzo-Loi-Zuddas' proof.

\begin{lem}[C. Arezzo, A. Loi, F. Zuddas  \cite{ALZ}]\label{integer}
 Let $M$ be a compact  toric \KE manifold with Delzant polytope $\Delta$ and  endowed with a projectively  induced metric. Then, on the open dense subset of $M$ consisting of all maximal dimensional orbits of the toric action, is defined a local \K potential reading as \eqref{diastpol}, i.e.
 $$ \log\left(1+\sum_{j=1}^n|z_j|^2+\sum_{j=n+1}^N a_j|z^{m_j}|^2\right).$$
 A coefficient  $a_i$    is different from zero if and only if the corresponding multi-index $m_i$ belongs to $\Delta\cap\Z^n$.
\end{lem}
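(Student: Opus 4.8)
The first assertion is essentially already contained in Lemma~\ref{1}. On the open dense subset $M_0\subseteq M$ of maximal orbits we have the torus coordinates $z=(z_1,\dots,z_n)\in(\C^*)^n$ of Proposition~\ref{toric}, the metric is $S^1$-invariant there, and the diastasis of a projectively induced $S^1$-invariant metric is forced into the shape \eqref{diastpol}. So I would begin by recording
$$\Phi=\log P,\qquad P=\sum_{m\in S}a_m\,|z^m|^2,\qquad a_m>0\ (m\in S),$$
where $S\subseteq\N^n$ is the finite support, with $0,e_1,\dots,e_n\in S$ because the diastasis contains the terms $1$ and $\sum_j|z_j|^2$. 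Everything then reduces to the combinatorial identity $S=\Delta\cap\Z^n$. Passing to the logarithmic coordinates $r_k=\tfrac12\log|z_k|^2$ of Proposition~\ref{toric} turns $P$ into the exponential sum $P=\sum_{m\in S}a_m e^{2\langle m,r\rangle}$, which is the form best suited to both inclusions.

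For $S\subseteq\Delta\cap\Z^n$ I would use the moment map. As in the proof of Proposition~\ref{toric}, the relation $i_{X_k}\omega=-dH_k$ identifies the moment map, up to an affine normalization with positive-definite linear part, with the logarithmic gradient $\nabla_r\Phi$; concretely $\mu$ is an affine image of the barycenter map
$$r\longmapsto\frac{\sum_{m\in S}a_m\,m\,e^{2\langle m,r\rangle}}{\sum_{m\in S}a_m\,e^{2\langle m,r\rangle}},$$
which for every $r$ is a convex combination of the exponents $m\in S$. As $r$ ranges over $\R^n$ these combinations sweep out the relative interior of $\mathrm{conv}(S)$, whose closure is $\mathrm{conv}(S)$. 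By the Atiyah--Guillemin--Sternberg theorem the closure of $\mu(M_0)$ is the Delzant polytope $\Delta$; matching the two descriptions (the terms $0,e_1,\dots,e_n\in S$ pin down the normalization with a vertex at the origin and edges along the positive semiaxes) gives $\mathrm{conv}(S)=\Delta$. Since $S$ consists of integer vectors, $S\subseteq\Delta\cap\Z^n$, and in particular every vertex of $\Delta$ lies in $S$.

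The reverse inclusion $\Delta\cap\Z^n\subseteq S$ --- that \emph{no} lattice point is skipped --- is the real content, and the step I expect to be hardest, precisely because the moment-map argument only sees $\mathrm{conv}(S)$ and is blind to interior gaps. Here I would bring in the \KE hypothesis through the Monge--Ampère equation of Lemma~\ref{2}. Writing the metric in the coordinates $w_k=\log z_k$, using $\det g^{(w)}_{k\bar j}=4^{-n}\det\Hess_r\Phi$ together with the change-of-variables factor $\prod_k|z_k|^2=e^{2\langle\mathbf 1,r\rangle}$, and substituting $\det g^{(z)}=e^{-\lambda\Phi/2}=P^{-\lambda/2}$, equation \eqref{MA} becomes the identity of exponential sums
$$R=4^n\,e^{2\langle\mathbf 1,r\rangle}\,P^{\,s},\qquad s:=n+1-\tfrac{\lambda}{2},$$
where $\mathbf 1=(1,\dots,1)$ and $R=P^{\,n+1}\det\Hess_r\log P$ is the bordered-determinant exponential sum with entries assembled from $P$ and its first and second $r$-derivatives. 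A degree count as in \cite{mannosalis} forces $s\in\Z_{>0}$, so the right-hand side is a genuine Laurent polynomial in the variables $e^{2r_k}$; since the $a_m$ are all positive, $P^{\,s}$ carries strictly positive coefficients on the full $s$-fold sumset of $S$, so the whole right-hand side is supported exactly on $\mathbf 1+(\text{$s$-fold sumset of }S)$ with no cancellation.

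My plan for finishing would then exploit this positive support identity face by face, arguing by induction on $\dim\Delta$. The vertices are already in $S$; at a vertex the Delzant condition supplies an $\mathrm{SL}_n(\Z)$ change of the torus coordinates straightening the corner to $(\R^+_0)^n$, after which Lemma~\ref{1} in the Bochner coordinates at the corresponding fixed point forces the neighbouring edge lattice points $v+u_1,\dots,v+u_n$ into $S$. Restricting $P$ to a proper face $F$ (degenerating the transverse coordinates) produces the analogous exponential sum for the lower-dimensional toric manifold with polytope $F$, so the inductive hypothesis saturates $\partial\Delta\cap\Z^n$, and the interior lattice points are recovered from $R=4^n e^{2\langle\mathbf 1,r\rangle}P^{\,s}$, positivity preventing any $m\in\Delta\cap\Z^n$ from dropping out. \textbf{The crux} is exactly this last conversion: turning the single scalar Monge--Ampère identity into the combinatorial statement that the support of $P$ is \emph{saturated}, i.e.\ equals all of $\mathrm{conv}(S)\cap\Z^n=\Delta\cap\Z^n$. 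This is where the \KE condition is indispensable, since a general projectively induced toric metric may be induced by an incomplete linear subsystem and need not use every lattice point.
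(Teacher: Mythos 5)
First, a point of comparison: this paper does not prove Lemma~\ref{integer} at all --- it is imported from \cite{ALZ}, and Subsection~\ref{alz} only sketches how it is \emph{used}. So your attempt has to be judged on its own merits rather than against a proof in the text.

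Your first half is essentially correct. Reducing to $\Phi=\log P$ with $P=\sum_{m\in S}a_m|z^m|^2$, $a_m>0$ and $0,e_1,\dots,e_n\in S$ does follow from Lemma~\ref{1} (one small precision: the diastasis \eqref{diastpol} is centered at $z=0$, which is \emph{not} in the open orbit, so Lemma~\ref{1} should be applied at the torus-fixed point corresponding to the origin vertex, whose Bochner coordinates extend over the affine chart containing $M_0$). The inclusion $S\subseteq\Delta\cap\Z^n$, obtained by identifying the moment map with the barycenter map whose image is the relative interior of $\mathrm{conv}(S)$ and invoking Atiyah--Guillemin--Sternberg/Delzant, is a correct and standard argument, and your normalization bookkeeping is adequate. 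Your remark that the \KE hypothesis is indispensable for the reverse inclusion is also right: for instance, projecting the cubic Veronese embedding of $\CP^2$ from the coordinate point corresponding to the monomial $Z_0Z_1Z_2$ (which lies on no secant or tangent line of the Veronese surface, since $Z_0Z_1Z_2$ has Waring rank $4$ and is square-free) produces a projectively induced toric metric on $\CP^2$ with Delzant polytope $3\Delta_2$ whose support misses the interior lattice point $(1,1)$.

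The genuine gap is that the reverse inclusion $\Delta\cap\Z^n\subseteq S$ --- which you yourself call ``the real content'' --- is never actually proved, and the plan you outline for it has a step that fails. The face induction is invalid: the metric induced on the toric subvariety attached to a proper face of $\Delta$ is projectively induced and toric but \emph{not} \KE (the Einstein condition does not pass to complex submanifolds), so the inductive hypothesis cannot be applied to faces; and the counterexample above shows the statement is genuinely false without \KE, so this step cannot be rescued by weakening the hypothesis. The remaining ``crux'' --- that positivity prevents any $m\in\Delta\cap\Z^n$ from dropping out of $R=4^n e^{2\langle\mathbf 1,r\rangle}P^{\,s}$ --- is an assertion, not an argument: the bordered determinant $R$ has coefficients of both signs, so positivity of the $a_m$ controls the support of the right-hand side but not of $R$, and extracting saturation of $S$ from this single scalar identity is precisely the hard work (already for $n=2$ this is the substance of \cite{mannosalis}). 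A minor but symptomatic error: the degree count does not give $s\in\Z_{>0}$; it gives $\lambda>0$, and indeed $s=n+1-\lambda/2=0$ for $(\CP^n,g_{FS})$, while a priori $s$ can even be fractional when $P$ is a perfect power. So, as written, the ``if'' direction of the lemma remains unproved.
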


Compact toric Fano manifolds with dimension less than five have been completely classified (see e.g. \cite{batsel}). Up to homotheties,  Delzant polytopes of such manifolds admitting a \KE metric (different from  complex projective spaces with the Fubini-Study metric and that cannot be decomposed as a product of lower dimensional manifolds) are the following
\begin{itemize}
\item $\{(x,y)\in\R^2 |\ 0\leq x,y\leq 2,\ -1\leq x-y\leq1\} $;
\item $\{(x,y,z)\in\R^3|\ x,y\geq 0,\ 0\leq z\leq2,\ x+z\leq 3,\ y-z\leq 1 \} $;
\item $\{(x,y,z,w)\in\R^4|\ 0\leq x,y,z,w\leq 2,\ -1\leq x+y-z-w\leq 1 \} $;
\item $\{(x,y,z,w)\in\R^4|\ x,y,\geq 0,\ 0\leq z,w\leq2,\ -1\leq z-w\leq1,\ x-z\leq1,\ y+z\leq3 \} $;
\item $\{(x,y,z,w)\in\R^4|\ x,y,z,w\geq 0,\ x-z\leq 1,\ y-w\leq1,\ z+w\leq 3,\ x+y\leq1,\ z+w-x-y\leq 1 \} $.
\end{itemize}

On one hand, the Calabi's diastasis function of a projectively induced \KE metric on the previous toric manifold needs to assume the particular form stated by Lemma \ref{integer}. On the other, it should also be  a solution of the  Monge-Ampère equation \eqref{MA}. In \cite{ALZ},  the authors  showed via direct computations that in all the previous cases, these two fact lead to a contradiction.

Moreover, the authors proved that manifolds related to Delzant polytopes $k\Delta$, differing from the polytope $\Delta$ for a homothety of ratio $k$, can be endowed with a \KE projectively induced metric if and only if the  manifold related to Delzant polytope $\Delta$ can. Hence, compact toric Fano manifold with a \KE projectively induced metric cannot be different from a product of complex projective spaces with Fubini-Study metrics.

\subsection{Final remark}
The absence of a classification of compact Fano toric manifolds with complex dimension greater than four, precludes the possibility of extending the Arezzo-Loi-Zuddas' approach to higher dimensional cases.

In \cite{mannosalis}, G. Manno and F. Salis proposed a different approach, based on the classification of special polynomial solutions of the Monge-Ampère equation describing the Einstein condition. In this way, the authors  classified \inv projectively induces \KE manifolds with complex dimension two, independently from  Arezzo, Loi and Zuddas. It is an open problem whether this approach can be generalized to any dimension.

\small{}

\end{document}